\newcommand{\f}{\frac}
\newcommand{\ds}{\displaystyle}
 \newtheorem{thm}{Theorem}[section]
 \newtheorem{cor}[thm]{Corollary}
 \newtheorem{lem}[thm]{Lemma}
 \newtheorem{prop}[thm]{Proposition}
 \theoremstyle{definition}
 \theoremstyle{remark}
 \numberwithin{equation}{section}
\begin{document}

\title[Some properties on the tensor square of Lie algebras]
 {Some properties on the tensor square of Lie algebras}

\author[P. Niroomand]{Peyman Niroomand}
\address{School of Mathematics and Computer Science\\
Damghan University, Damghan, Iran}
\email{p$\_$niroomand@yahoo.com, niroomand@du.ac.ir}

\thanks{\textit{Mathematics Subject Classification 2010.} Primary 17B60; 17B99.}


\keywords{Tensor square of Lie algebra, Schur multiplier of Lie algebra.}

\date{\today}

\begin{abstract}In the present paper we extend and improve the results of \cite{bl, br} for the tensor square of Lie algebras.
More precisely,  for any Lie algebra $L$ with $L/L^2$ of finite
dimension, we prove $L\otimes L\cong L\square L\oplus L\wedge L$ and
$Z^{\wedge}(L)\cap L^2=Z^{\otimes}(L)$. Moreover, we show that $L\wedge L$ is isomorphic to derived subalgebra of a cover of $L$, and finally we give a free presentation for it.
\end{abstract}

\maketitle

\section{Introduction and preliminaries}

Let $L$ and $H$ be Lie algebras over a fixed field  $F$ and let $[,]$ denotes
the Lie bracket. A bilinear map $\rho: L\times L\rightarrow H$ is said to be a Lie pairing provided that
\begin{itemize}
\item[(i)]$\rho([l,l' ], s) = \rho(l, [l' , s]) - \rho(l', [l, s]),$
\item[(ii)]$\rho(l, [s, s']) = \rho([s', l], s) - \rho([s, l], s'),$
\item[(iii)]$\rho([l, s], [l', s']) = -[\rho(s, l), \rho(l', s')],$
for all $l,l',s,s' \in L$.
\end{itemize}
For instance it can be checked that the map $L\times L\rightarrow L^2$ is a Lie pairing.

The theory of tensor square of groups was extended to tensor square of Lie algebras  by Ellis in \cite{el}.

Specifically, the tensor square $L\otimes L$ is a Lie algebra generated by the symbols $l\otimes k$ subject to the following relations
\begin{itemize}
\item[(i)] $c(l\otimes k) = cl\otimes k = l\otimes ck,$
\item[(ii)] $(l+l')\otimes k = l\otimes k + l'\otimes k,$\\
$l\otimes(k + k') = l\otimes k + l\otimes k',$
\item[(iii)] $[l, l']\otimes k = l\otimes[l',k]-l'\otimes[l,k],$\\
$l\otimes[k, k'] = ([k',l])\otimes k - [k,l]\otimes k',$
\item[(iv)] $[(l\otimes k), (l'\otimes k')] = -[k,l] \otimes[k',l']$ for all $c\in F$, $l, l', k, k'\in L$.
\end{itemize}

Recall from \cite[Proposition 1]{el} the mapping
\[h:L\times L\rightarrow L\otimes L, (l,l')\mapsto l\otimes l'\]
is a universal Lie pairing, that is for any Lie pairing
$h' :L \times L\rightarrow Q$ there is a unique Lie homomorphism $\zeta: L\otimes L\rightarrow Q$ such that $\zeta h=h'$.

Let $L\square L$ denotes the submodule of $L\otimes L$ generated by $l\otimes l$ for all $l\in L$.
Then the exterior square $L\wedge L$ of $L$ is the quotient $L\otimes L/L\square L$, for $l\otimes l'\in L\otimes L$ we denote the coset $l\otimes l'+ L\square L$ by $l\wedge l'$.

Ellis in \cite{el} defined $J_2(L) = \mathrm{ker}(L\otimes
L\stackrel{\kappa}\rightarrow L^2, l \otimes l'\mapsto [l, l'])$.
The kernel of $\kappa$ contains $L\square L$, thus $\kappa$ induces
an epimorphism $\kappa ':L\wedge L\rightarrow L^2$. It is known that
the Schur multiplier $\mathcal{M}(L)$ is the kernel of $\kappa '$ by the
result of \cite{el1}.

The concept of  Whitehead's quadratic functor $\Gamma $ $($as defined in \cite{wh} for groups and \cite{el} for Lie algebras$)$ gives
 the natural epimorphism $\Gamma(L/L^2)\rightarrow L\square L$ defined by $\gamma(L^2+l)\mapsto l\otimes l$ and
helps to survey the connection between $L\square L$ and $L/L^2\square L/L^2$.

Several authors tried to extend known results of groups to Lie
algebras $($see for instance \cite{ba, bo, el1, el, ma, ni3, ni, ni1,ni2, sa1, sa}$).$

 The present paper is devoted  to  develop and improve an analogous theory of tensor
products from \cite{bl, br} for Lie algebras.
More precisely,  we obtain strong Lie algebra version of \cite{bl} that is
\[L\otimes L\cong L\square L\oplus L\wedge L~\text{and}~Z^{\wedge}(L)\cap L^2=Z^{\otimes}(L)\] where $L/L^2$ is a finite dimensional Lie algebra and  the tensor and exterior centers of a Lie algebra $L$ are defined as follows
\begin{center}
  \begin{tabular}{lp{7cm}}
$Z^{\otimes}(L)=$&$\{~l\in L~|~l\otimes l'=0,~\text{for all}~l'\in L\}$ and \\
$Z^{\wedge}(L)=$&$\{~l\in L~|~l\wedge l'=0,~\text{for all}~l'\in L\},$
\end{tabular}
  \end{center}
respectively. Moreover, we prove that $L\wedge L$ is isomorphic to derived subalgebra of a cover of $L$, and we give a free presentation for the exterior square  analogues to \cite{br, bl}.

The notation and terminology which are used throughout this paper is standard and follows  \cite{el1, el, ni}.

\section{Main Results}
This section is devoted  to develop and improve a  Lie algebra version of  \cite{bl, br}.

First we state the following two lemmas which will be used in the rest.
\begin{lem}\label{l1e}$($See \cite[Corollary 3.2]{ni}$)$ Let $L$ be a finite dimensional Lie algebra and $\pi:L\otimes L\rightarrow L/L^2\otimes L/L^2$ be the natural epimorphism. Then the restriction  $\pi:L\square L\rightarrow L/L^2\square L/L^2$  is an isomorphism.
\end{lem}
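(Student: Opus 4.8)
The plan is to exploit the Whitehead quadratic functor $\Gamma$ as a common ``cover'' sitting above both $L\square L$ and $(L/L^2)\square(L/L^2)$. Writing $A=L/L^2$, which is a finite dimensional \emph{abelian} Lie algebra, I would first record that $\pi$ does restrict to a surjection onto $A\square A$: since $\pi(l\otimes l')=(L^2+l)\otimes(L^2+l')$, the generator $l\otimes l$ of $L\square L$ is carried to the generator $\overline{l}\otimes\overline{l}$ of $A\square A$, where $\overline{l}=L^2+l$. Hence $\pi(L\square L)=A\square A$, and only \emph{injectivity} of the restriction remains at stake.

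Next I would set up the commutative triangle coming from the natural epimorphisms attached to $\Gamma$. As recalled in the introduction there is a natural epimorphism $\alpha\colon\Gamma(A)\rightarrow L\square L$, $\gamma(\overline{l})\mapsto l\otimes l$, and applying the very same construction to the abelian Lie algebra $A$ (for which $A/A^2=A$) yields a natural epimorphism $\beta\colon\Gamma(A)\rightarrow A\square A$, $\gamma(\overline{l})\mapsto\overline{l}\otimes\overline{l}$. By naturality these fit together as $\pi\circ\alpha=\beta$.

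The key step is to prove that $\beta$ is an isomorphism. Because $A$ is abelian, every bracket in $A$ vanishes, so the defining relations (iii) and (iv) of the tensor square become trivial and $A\otimes A$ collapses to the ordinary tensor product $A\otimes_F A$, inside which $A\square A$ is exactly the submodule generated by the diagonal elements $\overline{l}\otimes\overline{l}$. Identifying this submodule with the symmetric square and comparing with $\dim_F\Gamma(A)=\binom{n+1}{2}$, where $n=\dim_F A$, one sees that $\beta$ is a surjection between $F$-spaces of the same dimension $\binom{n+1}{2}$, hence an isomorphism. This is precisely the point at which the hypothesis that $L/L^2$ is finite dimensional is used.

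Finally I would close the diagram. Since $\beta=\pi\circ\alpha$ is injective and $\alpha$ is surjective, any $x\in\ker\alpha$ satisfies $\beta(x)=\pi(\alpha(x))=0$, whence $x=0$; thus $\alpha$ is an isomorphism as well, and therefore $\pi|_{L\square L}=\beta\circ\alpha^{-1}$ is an isomorphism. The main obstacle is the abelian computation of the third paragraph: identifying $A\square A$ inside $A\otimes_F A$, determining its dimension, and matching it against $\dim_F\Gamma(A)$. Once both are seen to equal $\binom{n+1}{2}$ the surjection $\beta$ is forced to be injective, but verifying the dimension of the Whitehead functor, and that the diagonal submodule is spanned by exactly $\binom{n+1}{2}$ independent elements, is the step that requires genuine care, particularly when $\operatorname{char}F=2$.
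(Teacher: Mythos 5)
Your argument is correct, but note that the paper itself offers no proof of this lemma at all: it is imported verbatim from Corollary 3.2 of \cite{ni}, so there is no in-paper argument to match. Judged on its own, your route through the Whitehead functor is sound: the triangle $\pi\circ\alpha=\beta$ with $\alpha:\Gamma(L/L^2)\rightarrow L\square L$ and $\beta:\Gamma(L/L^2)\rightarrow L/L^2\square L/L^2$ commutes on generators, the dimension count $\dim\Gamma(A)=\dim(A\square A)=\binom{n+1}{2}$ for $A=L/L^2$ of dimension $n$ does hold in every characteristic (the diagonal span in $A\otimes_F A$ is spanned by the $x_i\otimes x_i$ together with the $x_i\otimes x_j+x_j\otimes x_i$, $i<j$, which are independent even when $\operatorname{char}F=2$), and the formal conclusion $\ker\alpha\subseteq\ker\beta=0$ is airtight. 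Two remarks. First, your proof uses only that $L/L^2$ is finite dimensional, never that $L$ is; so you have in fact proved the stronger Lemma \ref{l2e} directly, collapsing the paper's two-step reduction (finite-dimensional case by citation, then bootstrapping via the epimorphism $\Gamma(L/L^2)\rightarrow L\square L$) into a single self-contained argument. Second, the one ingredient you take on faith --- the existence and naturality of the epimorphism $\Gamma(L/L^2)\rightarrow L\square L$, $\gamma(\bar l)\mapsto l\otimes l$ --- is exactly what the paper also assumes from \cite{el}, so you are not smuggling in anything the paper does not already rely on; it would still be worth a sentence verifying that $\gamma(c\bar l)\mapsto c^2(l\otimes l)$ and the quadratic identity are respected, since that is where the well-definedness of $\alpha$ lives.
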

Lemma \ref{l1e} can be improved as follows
\begin{lem}\label{l2e} Let  $L$ be a Lie algebra such that $L/L^2$ is of finite dimension. Then the restriction  $\pi:L\square L\rightarrow L/L^2\square L/L^2$ is an isomorphism.
\end{lem}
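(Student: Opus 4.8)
The plan is to route everything through the natural epimorphism $\gamma\colon\Gamma(L/L^2)\to L\square L$ recalled in the introduction, combined with the elementary structure of the tensor square of an abelian Lie algebra, and to close the argument by a dimension count; finite dimensionality of $L/L^2$ is exactly what makes this count available. Write $n=\dim_F L/L^2$ and $A=L/L^2$. First observe that $\pi$ restricts to a \emph{surjection} $L\square L\to A\square A$: the target is generated by the elements $\bar a\otimes\bar a$ with $\bar a\in A$, and for any lift $a\in L$ of $\bar a$ one has $\pi(a\otimes a)=\bar a\otimes\bar a$. Thus the whole difficulty lies in injectivity.

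Next I would analyse $A\square A$ directly. Since $A$ is abelian, relations (iii) and (iv) of the tensor square become trivial, so $A\otimes A$ is abelian and coincides with the ordinary tensor product $A\otimes_F A$; consequently $A\square A$, the submodule spanned by the symmetric tensors $\bar a\otimes\bar a$, is the symmetric square $S^2(A)$ and has dimension $n(n+1)/2$. On the other hand, Whitehead's functor satisfies $\dim_F\Gamma(A)=n(n+1)/2$ as well, and the natural epimorphism $\Gamma(A)\to A\square A$, being a surjection between vector spaces of the same finite dimension, is an isomorphism. (Equivalently, and avoiding characteristic-$2$ issues in the dimension of $\Gamma$, one checks that $\bar a\mapsto\bar a\otimes\bar a$ is a universal quadratic map, so that $A\square A$ satisfies the defining property of $\Gamma(A)$.)

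Now I would assemble the factorization $\Gamma(A)\xrightarrow{\ \gamma\ }L\square L\xrightarrow{\ \pi\ }A\square A$, whose composite is precisely the natural epimorphism $\Gamma(A)\to A\square A$ of the previous paragraph, hence an isomorphism. Since $\gamma$ is surjective and the composite $\pi\gamma$ is injective, $\gamma$ is injective, so $\gamma$ is an isomorphism; in particular $L\square L$ is finite dimensional of dimension $n(n+1)/2$. Finally $\pi=(\pi\gamma)\,\gamma^{-1}$ is a composite of isomorphisms, which gives the claim.

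The step I expect to be the main obstacle is the identification in the middle paragraph, namely that the natural epimorphism $\Gamma(L/L^2)\to (L/L^2)\square(L/L^2)$ is genuinely an isomorphism and not merely a surjection; this is exactly where the hypothesis $\dim_F L/L^2<\infty$ enters, converting the surjection into an isomorphism via equality of finite dimensions (or via the universal property of $\Gamma$). An alternative route, bypassing the quadratic functor, is to write $L$ as the directed union of its finite dimensional subalgebras $L_\lambda$, note that $-\square-$ commutes with directed colimits and that $L^2=\bigcup_\lambda L_\lambda^2$, apply Lemma \ref{l1e} to each $L_\lambda$, and pass to the colimit; there the delicate point is justifying that these constructions commute with the colimit and that the isomorphisms $\pi_\lambda$ are natural enough to assemble into $\pi$.
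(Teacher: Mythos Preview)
Your argument is correct and follows essentially the same route as the paper: both hinge on the natural epimorphism $\Gamma(L/L^2)\to L\square L$ to force $L\square L$ into finite dimension and then finish by a dimension comparison. The paper is terser---it simply observes that $L\square L$ is finite dimensional and invokes Lemma~\ref{l1e}---whereas you spell out the factorization $\Gamma(A)\to L\square L\to A\square A$ and the dimension count explicitly, which has the mild advantage of not appealing to a result stated under the stronger hypothesis that $L$ itself be finite dimensional.
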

\begin{proof}The natural epimorphism $\Gamma(L/L^2)\rightarrow L\square L$ guarantees $L\square L$ to be of finite dimension, and hence
 the restriction $\pi:L\square L\rightarrow L/L^2\square L/L^2$ is an isomorphism by Lemma \ref{l1e}.
\end{proof}

\begin{lem}\label{l1} Let $L$ be an abelian Lie algebra with a basis  $\{x_1,x_2,\ldots,x_n\}$. Then \[
L\otimes L\cong L\square L\oplus\langle x_i\otimes x_j~|~1\leq i<j\leq n \rangle.\]
\end{lem}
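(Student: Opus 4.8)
The plan is to exploit that $L$ is abelian so that the Ellis tensor square collapses to the ordinary vector-space tensor product. Since $L^2=0$, every bracket in $L$ vanishes, and so the defining relations (iii) and (iv) reduce to the trivial identities $0=0$ (using that $0\otimes k=0$, which follows from relation (i) with $c=0$); in particular relation (iv) forces every bracket of generators to vanish, so $L\otimes L$ is an abelian Lie algebra. The only surviving constraints are the bilinearity relations (i) and (ii), whence $L\otimes L$ is the abelian Lie algebra whose underlying vector space is $L\otimes_F L$, with basis $\{x_i\otimes x_j\mid 1\le i,j\le n\}$. Because $L\otimes L$ is abelian, every subspace is an ideal, and so it suffices to produce the asserted decomposition at the level of vector spaces.

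Next I would describe $L\square L$ explicitly in this basis. Writing a general element as $l=\sum_i c_ix_i$ and expanding by bilinearity gives $l\otimes l=\sum_i c_i^2\,(x_i\otimes x_i)+\sum_{i<j}c_ic_j\,(x_i\otimes x_j+x_j\otimes x_i)$, so every generator of $L\square L$ lies in the span of the set $A=\{x_i\otimes x_i\mid 1\le i\le n\}\cup\{x_i\otimes x_j+x_j\otimes x_i\mid i<j\}$. Conversely, each element of $A$ is itself a generator of $L\square L$ or a combination of such, since $x_i\otimes x_j+x_j\otimes x_i=(x_i+x_j)\otimes(x_i+x_j)-x_i\otimes x_i-x_j\otimes x_j$. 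Hence $L\square L=\langle A\rangle$.

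Finally I would finish with a change of basis. Put $B=\{x_i\otimes x_j\mid i<j\}$, so that $\langle B\rangle$ is exactly the second summand in the statement. I claim $A\cup B$ is again a basis of $L\otimes L$: it has $n+\binom{n}{2}+\binom{n}{2}=n^2$ elements, and passing from $\{x_i\otimes x_j\}$ to $A\cup B$ only mixes, for each pair $i<j$, the two vectors $x_i\otimes x_j$ and $x_j\otimes x_i$ via the matrix $\left(\begin{smallmatrix}1&0\\1&1\end{smallmatrix}\right)$ of determinant $1$, while the diagonal vectors $x_i\otimes x_i$ are left unchanged. Since this change of coordinates is invertible over any field, $A\cup B$ is a basis, and therefore $L\otimes L=\langle A\rangle\oplus\langle B\rangle=L\square L\oplus\langle x_i\otimes x_j\mid i<j\rangle$.

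The conceptual crux is the initial observation that the tensor square reduces exactly to $L\otimes_F L$ with no further identifications beyond bilinearity; once this is in hand, everything that remains is linear algebra. The only point requiring real care is that the argument must hold in every characteristic, so I deliberately avoid dividing by $2$ and instead rely on the determinant-$\pm1$ change of basis above, which also covers the case of characteristic $2$.
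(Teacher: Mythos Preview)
Your proof is correct and follows essentially the same approach as the paper's: reduce the Ellis tensor square to the ordinary vector-space tensor product (the paper does this implicitly via $L\cong\bigoplus_i\langle x_i\rangle$), identify $L\square L$ with $\langle x_i\otimes x_i,\ x_i\otimes x_j+x_j\otimes x_i\mid i<j\rangle$, and observe that $\langle x_i\otimes x_j\mid i<j\rangle$ is a direct complement. Your argument is more careful---the explicit determinant-$1$ change of basis makes the characteristic-independence transparent---whereas the paper simply asserts the decomposition.
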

\begin{proof}
Since $L\cong \bigoplus_{i=1}^n\langle x_i\rangle$, we have \[L\otimes L\cong\langle x_i\otimes x_j+x_j\otimes x_i, x_i\otimes x_i~|~1\leq i<j\leq n \rangle\oplus\langle x_i\otimes x_j~|~1\leq i<j\leq n \rangle.\] The result follows from the fact $L\square L\cong \langle x_i\otimes x_j+x_j\otimes x_i, x_i\otimes x_i~|~1\leq i<j\leq n \rangle.$
\end{proof}
\begin{lem}\label{l2}Let $L$ be a Lie algebra and let $\pi :L\otimes L\rightarrow L/L^2\otimes L/L^2$ be the natural epimorphism. Then $\mathrm{Ker}~\pi=L\otimes L^2+L^2\otimes L=L\otimes L^2 $.
\end{lem}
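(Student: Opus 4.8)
The plan is to establish the two asserted equalities separately, disposing of the simpler identity $L\otimes L^2+L^2\otimes L=L\otimes L^2$ first and then turning to the description of $\mathrm{Ker}~\pi$. Throughout, $L\otimes L^2$ (resp. $L^2\otimes L$) denotes the submodule of $L\otimes L$ generated by the symbols $l\otimes x$ (resp. $x\otimes l$) with $l\in L$ and $x\in L^2$, and I would repeatedly exploit that $\pi=p\otimes p$ is the functorial map induced by the canonical projection $p:L\to L/L^2$, so that $\pi(l\otimes l')=(l+L^2)\otimes(l'+L^2)$.

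For the equality $L\otimes L^2+L^2\otimes L=L\otimes L^2$ it suffices to prove the single inclusion $L^2\otimes L\subseteq L\otimes L^2$, the reverse containment in the sum being trivial. By bilinearity (relation (ii)) I may reduce to a generator $x\otimes k$ with $x\in L^2$ and $k\in L$, and since $L^2$ is spanned by brackets I may further take $x=[l,l']$. Relation (iii) then gives $[l,l']\otimes k=l\otimes[l',k]-l'\otimes[l,k]$, and as $[l',k],[l,k]\in L^2$ both summands on the right lie in $L\otimes L^2$; hence $[l,l']\otimes k\in L\otimes L^2$, which yields the claimed identity.

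For the kernel, the inclusion $L\otimes L^2+L^2\otimes L\subseteq\mathrm{Ker}~\pi$ is immediate, since a tensor factor lying in $L^2$ maps to $0$ in $L/L^2$. The real work is the reverse inclusion. Writing $N=L\otimes L^2+L^2\otimes L$, I would show that the surjection $\bar\pi:(L\otimes L)/N\to L/L^2\otimes L/L^2$ induced by $\pi$ is injective by constructing an explicit inverse. To this end I would consider the bilinear map $L\times L\to(L\otimes L)/N$ sending $(l,l')\mapsto l\otimes l'+N$; it annihilates any pair with a factor in $L^2$ precisely because such a symbol lies in $N$, so it descends to a bilinear map on $L/L^2\times L/L^2$. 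Since $L/L^2$ is abelian, its Lie tensor square coincides with the ordinary vector-space tensor product, so the universal property of $\otimes$ furnishes a linear map $\Psi:L/L^2\otimes L/L^2\to(L\otimes L)/N$ with $\Psi\big((l+L^2)\otimes(l'+L^2)\big)=l\otimes l'+N$.

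Finally, $\Psi$ and $\bar\pi$ agree with the identity on generators of their respective domains, so they are mutually inverse linear maps; thus $\bar\pi$ is an isomorphism and $\mathrm{Ker}~\pi=N$. The step I expect to demand the most care is the construction of $\Psi$: one must verify that every symbol $l\otimes l'$ with a factor in $L^2$ genuinely belongs to $N$ (so that the map descends to $L/L^2\times L/L^2$), and that the target abelian Lie tensor square is legitimately the module tensor product, so that the universal property applies verbatim and $\Psi$ is well defined.
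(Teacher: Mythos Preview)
Your proof is correct and follows essentially the same route as the paper: both note $N\subseteq\ker\pi$ trivially and then build an inverse to the induced map $\bar\pi:(L\otimes L)/N\to L/L^2\otimes L/L^2$ via a universal property, the paper by verifying that $(\bar x,\bar y)\mapsto x\otimes y+N$ is a Lie pairing and invoking the universal Lie pairing, and you by observing (equivalently) that for the abelian algebra $L/L^2$ the Lie tensor square is the ordinary module tensor product, so ordinary bilinearity already suffices. The paper likewise obtains $L^2\otimes L\subseteq L\otimes L^2$ from relation~(iii), though it only says this is ``obtained directly from relations of $L\otimes L$'' without writing out the computation you give.
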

\begin{proof} Taking $N=L\otimes L^2+L^2\otimes L$. The epimorphism $\pi$ has $N$ in its kernel and $N$ is an ideal of $L\otimes L$ by invoking (iv), therefore inducing a map \[\bar\pi:(L\otimes L)/N\rightarrow L/L^2\otimes L/L^2.\] On the other hand,
let $\bar x$ denotes $x$ modulo $L^2$. Define \[\alpha:L/L^2\times L/L^2\rightarrow (L\otimes L)/N \text{ by}~ (\bar x,\bar y)\mapsto N+x\otimes y.\] It readily shown that $\alpha$ is well-defined. Let $l,l',k,k'\in L$, then
 \[\begin{array}{lcl}\alpha([\bar l,\bar{l'}],k)&=&N+[l,l']\otimes k=N+(l\otimes [l',k]-l'\otimes[l,k])\vspace{.3cm}\\&=&N+l\otimes [l',k]-N+l'\otimes[l,k]=\alpha(\bar l,[\bar{l'},\bar k])-\alpha(\bar{l'},[\bar l,\bar k]).\end{array}\] By a similar fashion, it is easily seen that
 \[\alpha(\bar l, [\bar{k}, \bar{k'}]) = \alpha([\bar {k'},l], k) - h([\bar{k},\bar{l}], \bar{k'})~\text{and}~
\alpha([\bar{k},\bar{l}],[\bar{l'},\bar{k'}]) =
-[\alpha(l,k),\alpha(\bar{l'},\bar{k'})].\] Thus $\alpha$ is a Lie
pairing, and hence  induces a Lie homomorphism
$\bar\alpha:L/L^2\otimes L/L^2\rightarrow (L\otimes L)/N$ by using
\cite[Lemma 1.1]{sa}. Hence $(L\otimes L)/N\cong L/L^2\otimes L/L^2$
since $\bar\pi\bar\alpha$ and $\bar\alpha\bar\pi$ are identity. The equality $L\otimes L^2+L^2\otimes L=L\otimes L^2$ is obtained directly from relations of $L\otimes L$.
\end{proof}
\begin{thm}\label{t2} Let $L/L^2$ be a finite dimensional Lie algebra. Then \[L\otimes L\cong L\wedge L\oplus L\square L.\]
\end{thm}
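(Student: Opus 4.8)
The plan is to exhibit the canonical short exact sequence
\[0\to L\square L\to L\otimes L\to L\wedge L\to 0\]
as a split one by constructing a Lie homomorphism retraction $\rho:L\otimes L\to L\square L$ that restricts to the identity on $L\square L$. Once such a $\rho$ is in hand, $x=\rho(x)+(x-\rho(x))$ writes every element as a sum of something in $L\square L$ and something in $\ker\rho$, so $L\otimes L=L\square L\oplus\ker\rho$ internally; and since $L\wedge L=(L\otimes L)/(L\square L)$, the quotient map restricted to $\ker\rho$ identifies $\ker\rho$ with $L\wedge L$, giving the claimed decomposition.

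To build $\rho$ I would assemble it from the two lemmas above. Since $L/L^2$ is abelian of finite dimension, I fix a basis and apply Lemma \ref{l1} to $L/L^2$ to obtain a decomposition $L/L^2\otimes L/L^2\cong(L/L^2\square L/L^2)\oplus W$; let $p$ denote the projection onto the first summand. By Lemma \ref{l2e} the restriction $\pi|_{L\square L}:L\square L\to L/L^2\square L/L^2$ is an isomorphism. I then set
\[\rho=(\pi|_{L\square L})^{-1}\circ p\circ\pi:L\otimes L\to L\square L.\]
A short computation shows $\rho$ is the identity on $L\square L$: for $x\in L\square L$ we have $\pi(x)\in L/L^2\square L/L^2$, so $p$ fixes $\pi(x)$ and $(\pi|_{L\square L})^{-1}$ returns $x$. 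I would also record here that Lemma \ref{l2} identifies $\mathrm{Ker}\,\pi=L\otimes L^2$, which pins down the domain of the construction.

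The main point to verify, and the only genuine obstacle, is that $\rho$ is a homomorphism of Lie algebras rather than merely a linear map. The key observation is that $L/L^2$ is abelian, whence relation (iv) forces the bracket on $L/L^2\otimes L/L^2$ to vanish; thus $L/L^2\otimes L/L^2$ is an abelian Lie algebra, any linear projection such as $p$ is automatically a Lie homomorphism, and $\rho$ is a composite of Lie homomorphisms. Finally, to see that the internal sum $L\otimes L=L\square L\oplus\ker\rho$ really is a direct sum of ideals, I would note that $L\square L$ is central in $L\otimes L$: by relation (iv), $[l\otimes k,\,m\otimes m]=-[k,l]\otimes[m,m]=0$ since $[m,m]=0$, so each generator $m\otimes m$ of $L\square L$ is central. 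Hence $[L\square L,\ker\rho]=0$, both summands are ideals, and the decomposition $L\otimes L\cong L\square L\oplus L\wedge L$ follows.
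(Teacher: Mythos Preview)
Your proof is correct and follows essentially the same route as the paper: both arguments use $\pi:L\otimes L\to L/L^2\otimes L/L^2$, the abelian decomposition of Lemma~\ref{l1} for $L/L^2$, and the isomorphism $\pi|_{L\square L}$ from Lemma~\ref{l2e} to split off $L\square L$. The only difference is cosmetic packaging---you phrase it as building a retraction $\rho=(\pi|_{L\square L})^{-1}\circ p\circ\pi$ and taking $\ker\rho$ as the complement, whereas the paper writes the complement explicitly as $\langle x_i\otimes x_j\mid i<j\rangle+\ker\pi$ and checks the intersection with $L\square L$ is zero; these are the same subspace.
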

\begin{proof}  By invoking Lemma \ref{l1}, we have \[L/L^2\otimes L/L^2\cong L/L^2\square L/L^2\oplus\langle \bar{x_i}\otimes \bar{x_j}~|~1\leq i<j\leq n\rangle .\]
Now consider the map $\pi:L\otimes L\rightarrow L/L^2\otimes L/L^2$,
we should have \[L/L^2\square L/L^2\oplus\langle \bar{x_i}\otimes
\bar{x_j}~|~1\leq i<j\leq n \rangle =\pi(L\square L+\langle
x_i\otimes x_j~|~1\leq i<j\leq n\rangle),\] where $\{\bar x_1,\bar
x_2,\ldots,\bar x_n\}$ is a basis for $L/L^2$, and hence $L\otimes
L=L\square L+\langle x_i\otimes x_j~|~1\leq i<j\leq n\rangle)+N$ due
to Lemma \ref{l2}.
 By virtue of Lemma \ref{l2e}, the restriction of $\pi$ maps $x$ to zero for all $x\in L\square L\cap(\langle x_i\otimes x_j~|~1\leq i<j\leq n\rangle)+N)$, which implies that
 \[L\otimes L\cong L\square L\oplus(\langle x_i\otimes x_j~|~1\leq i<j\leq n\rangle)+N)\] via Lemma \ref{l2e}.
 It is clear that $L\wedge L\cong \langle x_i\otimes x_j~|~1\leq i<j\leq n\rangle+N$, as required.
 \end{proof}
\begin{cor}Let $L/L^2$ be a finite dimensional Lie algebra. Then \[J_2(L)\cong L\square L\oplus \mathcal{M}(L).\]
\end{cor}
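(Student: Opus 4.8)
The plan is to intersect the direct-sum decomposition of Theorem \ref{t2} with $J_2(L)$ and then identify the complementary summand with the Schur multiplier. The whole argument is short and rests on the modular law together with the explicit complement produced in the proof of Theorem \ref{t2}.

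First I would record two facts. On one hand, the proof of Theorem \ref{t2} gives a concrete decomposition $L\otimes L=L\square L\oplus W$, in which $W=\langle x_i\otimes x_j\mid 1\leq i<j\leq n\rangle+N$ and the natural projection $p:L\otimes L\rightarrow L\wedge L=(L\otimes L)/(L\square L)$ restricts to an isomorphism $p|_W:W\rightarrow L\wedge L$; this is precisely what makes $W$ a complement to $L\square L=\mathrm{Ker}\,p$. On the other hand, since $\kappa(l\otimes l)=[l,l]=0$ for every $l\in L$, we have $L\square L\subseteq\mathrm{Ker}\,\kappa=J_2(L)$, and by construction $\kappa=\kappa'p$, where $\kappa':L\wedge L\rightarrow L^2$ is the induced epimorphism with $\mathrm{Ker}\,\kappa'=\mathcal{M}(L)$.

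Next I would apply the modular law to the inclusion $L\square L\subseteq J_2(L)$ inside $L\otimes L=L\square L\oplus W$. This yields
\[J_2(L)=J_2(L)\cap(L\square L\oplus W)=L\square L\oplus\bigl(J_2(L)\cap W\bigr),\]
the sum remaining direct because $L\square L\cap W=0$. It then remains to identify $J_2(L)\cap W$ with $\mathcal{M}(L)$. Since $\kappa=\kappa'p$ and $p|_W$ is an isomorphism, restricting $p$ to $W$ carries $\mathrm{Ker}(\kappa|_W)=J_2(L)\cap W$ isomorphically onto $\mathrm{Ker}\,\kappa'=\mathcal{M}(L)$. Combining these observations gives $J_2(L)\cong L\square L\oplus\mathcal{M}(L)$.

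I expect the only delicate point to be bookkeeping rather than genuine difficulty: one must be sure that the complement $W$ from Theorem \ref{t2} is mapped isomorphically onto $L\wedge L$ by $p$, so that the Schur multiplier, defined as a kernel inside $L\wedge L$, pulls back correctly, and that the modular law applies because $L\square L$ genuinely lies inside $J_2(L)$. Both are immediate from the preceding results, so no new construction or estimate is required.
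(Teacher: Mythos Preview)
Your argument is correct and follows essentially the same route as the paper: intersect the decomposition $L\otimes L=L\square L\oplus W$ of Theorem~\ref{t2} with $J_2(L)$ via the modular law (using $L\square L\subseteq J_2(L)$), and then identify $J_2(L)\cap W$ with $\mathcal{M}(L)$. The paper states the last identification via the isomorphism $J_2(L)/L\square L\cong\mathcal{M}(L)$, whereas you spell it out through $\kappa=\kappa'p$ and the isomorphism $p|_W$; these are the same observation.
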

\begin{proof} By Theorem \ref{t2}, \[\begin{array}{lcl}J_2(L)&=&\big(L\square L\oplus(\langle x_i\otimes x_j~|~1\leq i<j\leq n\rangle+N)\big)\cap J_2(L)\vspace{.3cm}\\&=&L\square L\oplus\big((\langle x_i\otimes x_j~|~1\leq i<j\leq n\rangle+N)\cap J_2(L)\big).\end{array}\]
The rest of proof is obtained by the fact $J_2(L)/L\square L\cong \mathcal{M}(L)$.
\end{proof}
\begin{cor} Let $L/L^2$ be a finite dimensional Lie algebra. Then $Z^{\wedge}(L)\cap L^2=Z^{\otimes}(L)$.
\end{cor}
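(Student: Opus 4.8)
The plan is to establish the two inclusions $Z^{\otimes}(L)\subseteq Z^{\wedge}(L)\cap L^2$ and $Z^{\wedge}(L)\cap L^2\subseteq Z^{\otimes}(L)$ separately, using throughout the natural epimorphism $\pi:L\otimes L\rightarrow L/L^2\otimes L/L^2$ together with Lemmas \ref{l1}, \ref{l2}, \ref{l2e} and the decomposition of Theorem \ref{t2}.

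For the inclusion $Z^{\otimes}(L)\subseteq Z^{\wedge}(L)\cap L^2$, the containment in $Z^{\wedge}(L)$ is immediate: the canonical projection $L\otimes L\rightarrow L\wedge L$ carries $l\otimes l'$ to $l\wedge l'$, so $l\otimes l'=0$ for all $l'$ forces $l\wedge l'=0$ for all $l'$. To see that $Z^{\otimes}(L)\subseteq L^2$, I would take $l\in Z^{\otimes}(L)$ and push the relations $l\otimes l'=0$ through $\pi$, obtaining $\bar l\otimes\bar{l'}=0$ in $L/L^2\otimes L/L^2$ for every $l'$. Since $L/L^2$ is abelian and finite dimensional with basis $\{\bar x_1,\dots,\bar x_n\}$, Lemma \ref{l1} shows that the $n^2$ elements $\bar x_i\otimes\bar x_j$ are linearly independent in $L/L^2\otimes L/L^2$; expanding $\bar l=\sum_i a_i\bar x_i$ and testing against $l'=x_j$ then forces every coordinate $a_i$ to vanish, i.e. $\bar l=0$ and $l\in L^2$.

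For the reverse inclusion $Z^{\wedge}(L)\cap L^2\subseteq Z^{\otimes}(L)$, which I expect to be the crux, I would fix $l\in Z^{\wedge}(L)\cap L^2$ and an arbitrary $l'\in L$, aiming to show $l\otimes l'=0$. On one hand $l\wedge l'=0$ says precisely that $l\otimes l'$ lies in the kernel $L\square L$ of $L\otimes L\rightarrow L\wedge L$. On the other hand $l\in L^2$ gives $l\otimes l'\in L^2\otimes L\subseteq \mathrm{Ker}\,\pi$ by Lemma \ref{l2}, so $\pi(l\otimes l')=0$. The two facts combine: by Lemma \ref{l2e} the restriction $\pi|_{L\square L}:L\square L\rightarrow L/L^2\square L/L^2$ is an isomorphism, hence injective, so an element of $L\square L$ annihilated by $\pi$ must be zero. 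Therefore $l\otimes l'=0$, and as $l'$ was arbitrary, $l\in Z^{\otimes}(L)$.

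The main obstacle is this reverse inclusion: it rests on the observation that for $l\in L^2$ the element $l\otimes l'$ sits simultaneously in $L\square L$ and in $\mathrm{Ker}\,\pi$, and on the injectivity of $\pi$ on $L\square L$ furnished by Lemma \ref{l2e} (which is exactly where the finiteness of $\dim L/L^2$ enters, via Lemma \ref{l1}). The direct-sum decomposition of Theorem \ref{t2} is the structural reason these pieces are compatible, while the forward inclusion is comparatively routine once the structure of the abelian tensor square from Lemma \ref{l1} is invoked.
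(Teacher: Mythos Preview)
Your proposal is correct and follows essentially the same route as the paper: both inclusions are handled exactly as in the original, with $l\otimes l'\in L\square L\cap\mathrm{Ker}\,\pi$ and the injectivity from Lemma~\ref{l2e} giving $Z^{\wedge}(L)\cap L^2\subseteq Z^{\otimes}(L)$, and the vanishing of $\bar l\otimes\bar{l'}$ in the abelian tensor square forcing $l\in L^2$ for the other direction. Your write-up is in fact more explicit than the paper's (you invoke Lemma~\ref{l2} and Lemma~\ref{l1} where the paper leaves these implicit), and the mention of Theorem~\ref{t2} as background is accurate but not actually needed in the argument.
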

\begin{proof} Let $l\in Z^{\wedge}(L)\cap L^2$. Then for all $l'\in L$ we have $l\otimes l'\in L\square L$, and also $l\otimes l'\in\mathrm{Ker}\pi$. Thus Lemma \ref{l2e} implies that $l\otimes l'=0$, and hence $l\in Z^{\otimes}(L).$  Conversely $Z^{\otimes}(L)\subseteq L^2$ since for all $l\in Z^{\otimes}(L)$, we have $l+L^2\otimes l'+L^2=0$ which implies that $l\in L^2$. The result follows.
\end{proof}
A pair of Lie algebras $(K,M)$ is called a defining pair
for $L$ if
\begin{itemize}
\item[(i)]$0\rightarrow M \rightarrow K \rightarrow L \rightarrow 0$ is exact;
 \item[(ii)]$M \subseteq Z(K) \cap K^2$.
\end{itemize}  When $L$ is finite
dimensional then the dimension of  $K$ is bounded. The $K$ of
maximal dimension is called a cover of $L$, and the
corresponding $M$, is the Schur multiplier of L.
From \cite{ba}, the Schur multiplier of $L$ can be also defined in term of  free Lie algebra. More precisely, for an exact sequence
 \[0\rightarrow R \rightarrow F \rightarrow L \rightarrow 0,\] where $F$ is a free Lie algebra, the Schur multiplier of
$L$ is isomorphic to the factor Lie algebra $R\cap F^2/[R,F]$. It is known by \cite{ma} that for a finite dimension Lie algebra Lie covers always exist and are unique up to isomorphism.

We are interested in proving that the derived subalgebra of a covering Lie algebra of $L$ is isomorphic to $L\wedge L$. These developes the results which have been obtained in \cite{bl, br}.
First we recall the following proposition from \cite{el1}.
 \begin{prop}\label{np} Let $F$ be a free Lie algebra. Then  \[F^2\rightarrow F\wedge F, [x,y]\mapsto x\wedge y\] is an isomorphism.
 \end{prop}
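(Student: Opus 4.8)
The plan is to recognize the stated map as the inverse of the commutator epimorphism $\kappa':F\wedge F\to F^2$, $x\wedge y\mapsto[x,y]$, and then to show that this epimorphism is actually an isomorphism by computing that the Schur multiplier of a free Lie algebra vanishes. The upshot is that the content of the proposition is entirely concentrated in the triviality of $\mathcal{M}(F)$.

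First I would recall from the discussion following Ellis's definition of $J_2(L)$ that $\kappa:L\otimes L\to L^2$, $l\otimes l'\mapsto[l,l']$, is an epimorphism whose induced map $\kappa':L\wedge L\to L^2$ is still an epimorphism, and that by the result of \cite{el1} the kernel of $\kappa'$ is precisely the Schur multiplier $\mathcal{M}(L)$. Applying this with $L=F$, the map $\kappa'$ is an isomorphism if and only if $\mathcal{M}(F)=0$.

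Next I would compute $\mathcal{M}(F)$ for $F$ free using the free-presentation description of the multiplier recalled earlier from \cite{ba}, namely $\mathcal{M}(L)\cong R\cap F^2/[R,F]$ for an exact sequence $0\to R\to F\to L\to 0$ with $F$ free. Since here $F$ is itself free, the sequence $0\to 0\to F\to F\to 0$ is such a free presentation of $F$ with $R=0$, so that $\mathcal{M}(F)\cong R\cap F^2/[R,F]=0$. Consequently $\kappa'$ is a bijective Lie homomorphism, hence an isomorphism.

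Finally, since $\kappa'$ is an isomorphism sending $x\wedge y$ to $[x,y]$, its inverse is the Lie homomorphism $F^2\to F\wedge F$ determined by $[x,y]\mapsto x\wedge y$, which is therefore well defined and an isomorphism, as claimed. I expect the only genuinely subtle point to be the apparent well-definedness issue — an element of $F^2$ can be written as a bracket in many ways, so $[x,y]\mapsto x\wedge y$ is not obviously a function — but this is handled automatically by exhibiting the map as $(\kappa')^{-1}$ rather than checking independence of the representation by hand. In other words, the vanishing of $\mathcal{M}(F)$ is precisely what forces the assignment to be consistent, and that is where the real work lies.
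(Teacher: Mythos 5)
The paper does not actually prove this proposition --- it is recalled verbatim from \cite{el1}, so there is no internal argument to compare yours against. Taken purely as a formal deduction from the two facts the paper quotes, namely that $\mathrm{ker}(\kappa':L\wedge L\rightarrow L^2)=\mathcal{M}(L)$ and that $\mathcal{M}(L)\cong (R\cap F^2)/[R,F]$ for a free presentation, your chain of implications is coherent, and you are right that the whole content of the statement is the injectivity of $\kappa'$ (equivalently, the well-definedness of $[x,y]\mapsto x\wedge y$); surjectivity is immediate.

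The genuine problem is circularity. The identification of $\mathrm{ker}\,\kappa'$ with the Hopf-type quotient $(R\cap F^2)/[R,F]$ is not logically prior to Proposition \ref{np}: in the standard development, and explicitly in this paper, one first proves $F^2\cong F\wedge F$ for free $F$ and then uses it to construct the relevant maps --- the proof of the paper's final theorem begins ``Using Proposition \ref{np}, there is a Lie homomorphism $\varepsilon:F^2\rightarrow F/R\wedge F/R$,'' and that theorem ($L\wedge L\cong F^2/[R,F]$) is precisely what packages the assertion that $\kappa'$ is onto $L^2$ with kernel $(R\cap F^2)/[R,F]$. Deriving Proposition \ref{np} from that package inverts the logical order and assumes what is to be proved, unless you supply an independent argument that $\mathrm{ker}\,\kappa'\cong H_2(L)$ --- and Ellis's own homological argument establishes the free case first. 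A secondary issue: the paper's cover-theoretic definition of $\mathcal{M}(L)$ is formulated only for finite-dimensional $L$, while a nonabelian free Lie algebra is infinite dimensional, so you must in any case invoke the homological or Hopf-formula definition of $\mathcal{M}(F)$ and know the identification with $\mathrm{ker}\,\kappa'$ holds in that generality. The honest route is the one your last sentence gestures at but does not execute: show directly that the Lie pairing $(x,y)\mapsto [x,y]$ induces a surjection $F\wedge F\rightarrow F^2$ and that it is injective because $H_2(F)=0$ for free Lie algebras (which have a length-one free resolution), or reproduce Ellis's direct construction of the inverse. Merely quoting the two downstream facts does not discharge the well-definedness issue you correctly identify as the crux.
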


\begin{thm}Let $L$ be a Lie algebra with a covering Lie algebra $\bar L$. Then
\[L\wedge L\cong \bar L^2.\]
\end{thm}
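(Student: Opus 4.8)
The plan is to run the argument through an arbitrary free presentation of $L$ and to realise both $L\wedge L$ and the derived subalgebra of a cover as one and the same quotient $F^2/[R,F]$. So I would fix an exact sequence $0\rightarrow R\rightarrow F\rightarrow L\rightarrow 0$ with $F$ a free Lie algebra, for which $\mathcal{M}(L)\cong R\cap F^2/[R,F]$ as recalled above. The first goal is the free presentation $L\wedge L\cong F^2/[R,F]$, which is the exterior analogue of \cite{bl,br}.

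To obtain this I would use the functoriality of $\wedge$: the epimorphism $F\rightarrow L$ induces an epimorphism $F\wedge F\rightarrow L\wedge L$, $x\wedge y\mapsto \bar x\wedge\bar y$. Every generator $r\wedge f$ with $r\in R$, $f\in F$ lies in its kernel, since $r$ maps to $0$ in $L$; using the antisymmetry $x\wedge y=-y\wedge x$ of the exterior square, the subalgebra these generate is $\langle r\wedge f\mid r\in R,\ f\in F\rangle$. The main obstacle is the reverse inclusion, namely that this subalgebra is the whole kernel, equivalently $L\wedge L\cong (F\wedge F)/\langle r\wedge f\mid r\in R,\ f\in F\rangle$. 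I expect to settle this by a presentation argument: the defining relations of $L\wedge L$ are precisely the images of those of $F\wedge F$ together with the relations forcing $r\wedge f=0$, so the obvious map in the other direction is well defined and inverse to the induced epimorphism. Once this is in hand, Proposition \ref{np} identifies $F\wedge F$ with $F^2$ via $x\wedge y\mapsto[x,y]$, carrying $\langle r\wedge f\mid r\in R,\ f\in F\rangle$ onto $[R,F]$ (noting $[R,F]\subseteq F^2$), and hence $L\wedge L\cong F^2/[R,F]$.

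It then remains to realise $F^2/[R,F]$ as the derived subalgebra of a cover. Since $R/(R\cap F^2)$ embeds into the vector space $F/F^2$, I would choose an ideal $S$ with $[R,F]\subseteq S\subseteq R$, $S+(R\cap F^2)=R$ and $S\cap(R\cap F^2)=[R,F]$; then $R/S\cong (R\cap F^2)/[R,F]\cong\mathcal{M}(L)$, while $[R,F]\subseteq S$ forces $R/S\subseteq Z(F/S)\cap (F/S)^2$, so that $F/S$ is a cover of $L$ and, by the uniqueness of covers quoted from \cite{ma}, is isomorphic to the given $\bar L$. Finally, because $S\subseteq R$ we get $S\cap F^2=S\cap(R\cap F^2)=[R,F]$, whence \[\bar L^2\cong (F/S)^2=(F^2+S)/S\cong F^2/(F^2\cap S)=F^2/[R,F]\cong L\wedge L,\] which is the desired conclusion. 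The delicate points to watch are the exactness of the choice of $S$ (that a complement can indeed be taken inside $R/[R,F]$) and the bookkeeping yielding $S\cap F^2=[R,F]$.
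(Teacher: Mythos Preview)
Your argument is correct, but it follows a genuinely different route from the paper's. The paper starts from the \emph{cover}: it writes $\bar L\cong F/R$ with $F$ free, so that $L\cong F/S$ for some $S\supseteq R$ with $S/R\cong\mathcal{M}(L)$; using that $[S,F]\subseteq R\cap F^2$ (because $S/R$ is central in $F/R$), it builds mutually inverse maps between $F/S\wedge F/S$ and $F^2/(R\cap F^2)\cong (F^2+R)/R=\bar L^2$ directly. You instead start from a free presentation of $L$ itself, first establish $L\wedge L\cong F^2/[R,F]$ (which in the paper is the \emph{next} theorem), and then manufacture a cover $F/S$ by choosing a vector-space complement $S/[R,F]$ to $(R\cap F^2)/[R,F]$ inside $R/[R,F]$, finally invoking uniqueness of covers from \cite{ma} to identify $F/S$ with the given $\bar L$.

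What each approach buys: your route packages the two theorems together and makes the construction of a cover from a free presentation explicit, at the cost of relying on the uniqueness of covers (hence implicitly on $L$ being finite dimensional, which is in any case built into the paper's definition of a cover). The paper's route is more self-contained for this single statement: it works with the given $\bar L$ directly and never needs uniqueness. Your ``presentation argument'' for the kernel of $F\wedge F\rightarrow L\wedge L$ is exactly what the paper carries out in the subsequent theorem by exhibiting the inverse map $F/R\wedge F/R\rightarrow F^2/[R,F]$, so that step is sound; and your bookkeeping $S\cap F^2=S\cap(R\cap F^2)=[R,F]$ is correct since $S\subseteq R$.
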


\begin{proof} Let $\bar L$ be a cover of $L$. Then there is an exact sequence
 \[0\rightarrow \mathcal{M}(L) \rightarrow \bar L \rightarrow L \rightarrow 0.\] Suppose that $\bar L\cong F/R$  where $F$ is a free Lie algebra, so
$L$ is isomorphic to $F/S$ in which $\mathcal{M}(L)\cong S/R$.

By invoking Proposition \ref{np}, there is a  Lie homomorphism $$\eta:F^2\rightarrow F/S\wedge F/S$$ sending
${\alpha_1}[x_1,y_1]+\ldots+{\alpha_k}[x_k,y_k] $ to ${\alpha_1}(S+x_1\wedge S+y_1)+\ldots+ {\alpha_k}(S+x_k\wedge S+y_k)$.
Evidently, $\eta$ factors through $R\cap F^2$, to induces a homomorphism \[\bar\eta: \f{R\cap F^2}{R} \rightarrow F/S\wedge F/S.\]
From the definition of $\bar\eta$, its surjectivity follows immediately. We claim that $\bar\eta$ is injective.

Define $\ds\theta:F/S\times F/S\rightarrow \f{F^2}{R\cap F^2} $ by $\theta(S+f,S+f_1)=R\cap F^2+[f,f_1]$. Since $[S,F]\subseteq R\cap F^2$, one can easily check that $\theta$ is well-defined.
Since $h:F/S\times F/S\rightarrow F/S\otimes F/S$ is a universal Lie pairing, there exists a Lie homomorphism $\ds\tau:F/S\otimes F/S\rightarrow \f{F^2}{R\cap F^2} $ such that $\tau h=\theta$.
Of course, $\tau$ is trivial on the ideal $F/S\square F/S$, and so it
induces a Lie homomorphism $$\bar\tau:F/S\wedge F/S\rightarrow \ds\f{F^2}{R\cap F^2} .$$ Now both of $\bar\tau\bar\eta$ and $\bar\eta\bar\tau$ are identity, and it implies that $$F/S\wedge F/S\cong \ds\f{F^2}{R\cap F^2}\cong \f{F^2+R}{R},$$ as required.
\end{proof}
\begin{thm} Let $0\rightarrow R \rightarrow F \rightarrow L \rightarrow 0$ be a free presentation of a Lie algebra $L$. Then \[L\wedge L\cong F^2/[R,F].\]
\end{thm}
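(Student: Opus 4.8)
The plan is to mimic the construction in the preceding theorem, but now directly for the presentation $L\cong F/R$ rather than for a cover. Writing $\bar{x}$ for the image of $x\in F$ in $L=F/R$, I would first build a surjection $\bar\eta\colon F^2/[R,F]\to L\wedge L$ and then produce an explicit inverse in order to force injectivity.

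First I would invoke Proposition \ref{np}, which gives the isomorphism $F\wedge F\cong F^2$ sending $x\wedge y$ to $[x,y]$ for the free Lie algebra $F$. Composing its inverse with the map $F\wedge F\to L\wedge L$ induced by the projection $F\to F/R$, one obtains a Lie homomorphism $\eta\colon F^2\to L\wedge L$ with $[x,y]\mapsto \bar{x}\wedge\bar{y}$. Since the projection is surjective, $\eta$ is surjective. For $r\in R$ and $f\in F$ we have $\eta([r,f])=\bar{r}\wedge\bar{f}=0$ because $\bar{r}=0$; as $[R,F]\subseteq F^2$, this shows $[R,F]\subseteq\mathrm{Ker}~\eta$, so $\eta$ descends to a surjection $\bar\eta\colon F^2/[R,F]\to L\wedge L$.

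To obtain the reverse map I would define $\theta\colon L\times L\to F^2/[R,F]$ by $\theta(\bar f,\bar{f'})=[R,F]+[f,f']$. Well-definedness is the first thing to check: replacing $f$ by $f+r$ with $r\in R$ alters $[f,f']$ by $[r,f']\in[R,F]$, and likewise in the second slot, so $\theta$ is independent of the choice of representatives. Next I would verify that $\theta$ satisfies the three Lie pairing axioms (i)--(iii); these reduce to bilinearity and the Jacobi identity for the bracket in $F$, computed modulo $[R,F]$, exactly as in the verification for $\alpha$ in Lemma \ref{l2}. The universal property of $h\colon L\times L\to L\otimes L$ then yields a Lie homomorphism $\tau\colon L\otimes L\to F^2/[R,F]$ with $\tau h=\theta$; since $\theta(\bar f,\bar f)=[R,F]+[f,f]=0$, the map $\tau$ kills $L\square L$ and induces $\bar\tau\colon L\wedge L\to F^2/[R,F]$.

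Finally I would check on generators that $\bar\tau\bar\eta$ and $\bar\eta\bar\tau$ are both the identity: $\bar\eta$ sends the class of $[f,f']$ to $\bar{f}\wedge\bar{f'}$, which $\bar\tau$ sends back to the class of $[f,f']$, and conversely. This yields $L\wedge L\cong F^2/[R,F]$. I expect the main obstacle to be the routine but somewhat delicate confirmation that $\theta$ is a genuine Lie pairing --- in particular getting the signs in axioms (i) and (ii) to match the defining relations --- together with being careful that all the ambiguity introduced by changing representatives lands inside $[R,F]$ and not merely inside $R\cap F^2$.
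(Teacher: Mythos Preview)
Your proposal is correct and follows essentially the same route as the paper: both construct the map $F^2/[R,F]\to L\wedge L$ from Proposition~\ref{np} and the projection $F\to F/R$, then build the inverse by exhibiting $(\bar f,\bar f')\mapsto [R,F]+[f,f']$ as a Lie pairing, passing through $L\otimes L$ via universality, killing $L\square L$, and checking the two composites are identities. Your write-up is in fact slightly more explicit than the paper's about well-definedness and about why $[R,F]$ (rather than merely $R\cap F^2$) is the correct kernel to quotient by.
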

\begin{proof}
Using Proposition \ref{np}, there is a  Lie homomorphism $\varepsilon:F^2\rightarrow F/R\wedge F/R$ sending
${\alpha_1}[x_1,y_1]+\ldots+{\alpha_k}[x_k,y_k] $ to ${\alpha_1}(R+x_1\wedge R+y_1)+\ldots+{\alpha_k}(R+x_k\wedge R+y_k)$.
Since $\varepsilon$ sends $[R,F]$ to identity, it induces a Lie homomorphism $\bar\varepsilon:F^2/[R,F]\rightarrow  F/R\wedge F/R$.
On the other hand, since $\xi:F/R\times F/R\rightarrow F^2/[R,F]$ is a Lie pairing, obviously there exists a Lie homomorphism
$\bar\xi:F/R\wedge F/R\rightarrow F^2/[R,F]$ sending $R+f\wedge R+f_1$ to $[R,F][f,f_1]$. It is readily obtained that $\bar\varepsilon\bar\xi$ and
$\bar\xi\bar\varepsilon$ are identity, and the proof is complete.
\end{proof}

\end{document}